\documentclass[a4paper,reqno,11pt,oneside]{amsart}

\usepackage{amsfonts,amssymb,latexsym,xspace,epsfig,graphics,color}
\usepackage{amsmath,enumerate,stmaryrd,xy}
\usepackage{bbm}

\numberwithin{figure}{section}
\usepackage{subfigure}
\usepackage{dsfont}
\usepackage{a4wide}
\usepackage{enumitem} 
\usepackage{cite}   

\makeatletter
\@namedef{subjclassname@2020}{
  \textup{2020} Mathematics Subject Classification}

\makeatother
\usepackage{tikz}
\usepackage{tkz-berge}
\usetikzlibrary{arrows,backgrounds,trees}
\usetikzlibrary{matrix,decorations.pathreplacing,positioning}
\usepackage{stmaryrd}
\newtheorem{theorem}{Theorem}[section]       
   
             \newtheorem{prop}[theorem]{Proposition}         \newtheorem{lemma}[theorem]{Lemma}
\newtheorem{corollary}[theorem]{Corollary}
\newtheorem{defn}[theorem]{Definition}
\newtheorem{remark}{Remark}[section]
\newtheorem{exa}{Example}[section]

\def\A{\mathcal{A}}
\def\T {\mathcal{T}}

\def\spann{\mathop {\rm span}\nolimits}
\def\rank {\mathop {\rm rank}\nolimits}

\def\Der {\mathop {\rm Der}\nolimits}
\def\dim {\mathop {\rm dim}\nolimits}
\def\charr {\mathop {\rm char}\nolimits}
\everymath{\displaystyle}

\bibdata{prob}
\bibstyle{alpha}

\title[]{Derivations of Evolution Algebras associated to graphs over a field of any characteristic}

\author{Tiago Reis and Paula Cadavid}

\address{Tiago Reis
\newline
 Universidade Federal do ABC, Avenida dos Estados, 5001-Bangu - Santo Andr\'e - SP, Brazil
 \newline
 Universidade Tecnológica Federal do Paraná, Av. Alberto Carazzai, 1640 - Cornélio Procópio - PR, Brazil \newline
e-mail: treis@utfpr.edu.br
}

\address{Paula Cadavid 
\newline
 Universidade Federal do ABC, Avenida dos Estados, 5001-Bangu - Santo Andr\'e - SP, Brazil
 \newline
e-mail: paula.cadavid@ufabc.edu.br, pacadavid@gmail.com}

\subjclass[2020]{17A36, 05C25, 17D92, 17D99}
\keywords{Genetic Algebra, Evolution Algebra, Derivation, Graph, Twin Partition} 

\begin{document}
\maketitle
\begin{abstract} 
The space of derivations of finite dimensional evolution algebras associated to graphs over a field with characteristic zero has been completely characterized in the literature. In this work we generalize that characterization by describing the derivations of this class of  algebras for fields of any characteristic.
\end{abstract}

\section{Introduction}
The Evolution Algebras are a type of non-associative genetic  algebras which appeared some years ago as an algebraic way to mimic the self-reproduction of alleles in non-Mendelian genetics.  The  first reference of a Theory of Evolution Algebras is due to Tian and Vojtechovsky in \cite{tian1} who  state the first properties for these mathematical structures. Further on, Tian in \cite{tian} gave many connections of evolution algebras with other mathematical fields such as graph theory, group theory, Markov processes, dynamical systems and others.  We refer the reader to \cite{PMP, YPMP, PMP2, camacho/gomez/omirov/turdibaev/2013, COT, costa1, Elduque/Labra/2015, Elduque/Labra/2019,  farrukh} and references therein for an overview of recent results. An evolution algebra is defined as follows.
\begin{defn}\label{def:evolalg}
Let $\mathbb{K}$ be a field  and let $\A:=(\A ,\, \cdot\,)$ be a  $\mathbb{K}$-algebra. We say that $\A$ is an evolution algebra if it admits a finite basis $S:=\{e_1,\ldots, e_n\}$, such that
\begin{equation}\label{eq:ea}
\begin{array}{rcl}
e_i \cdot e_i & = & \displaystyle \sum_{k=1}^{n} w_{ik} e_k, \text{ for } i\in\{1,\ldots,n\},\\[.2cm]
e_i \cdot e_j & = & 0,\text{ for }i,j\in\{1,\ldots,n\}\text{ such that }i\neq j.
\end{array}
\end{equation} 
 \end{defn}
 
 \smallskip
 
 A basis $S$ of $\A$  satisfying \eqref{eq:ea} is called {\it natural}. The scalars $w_{ik} \in \mathbb{K}$, for $i,k\in \{1,\ldots,n\}$, are called the {\it structure constants} of $\mathcal{A}$ relative to $S$ and $M_{S}=(w_{ik})$ is called the {\it structure matrix} of $\A$ relative to $S$. In this work we shall describe the space of derivations of a particular class of evolution algebras.
 
\smallskip

\begin{defn}
Let $\A$ be a $\mathbb{K}$-(evolution) algebra. A derivation  $d$ of $\A$ is an element of $\mathcal{L}_{\mathbb{K}}(\A)$,
where $\mathcal{L}_{\mathbb{K}}(\A)$ is the space of $\mathbb{K}$ linear operators on $\A$, such that
$$d(u \cdot v)= d(u) \cdot  v +  u\cdot d(v)\,$$
for all $u,v \in \A$. The space of all derivations of the $\mathbb{K}$-(evolution) algebra $\A$ is denoted by $\Der_{\mathbb{K}}(\A)$.
\end{defn}
In \cite{tian} it is proved that if $\A$ is a  $\mathbb{K}$-algebra with structure matrix $M_S=(w_{ij})$, then $d \in \mathcal{L}_{\mathbb{K}}(\mathcal{A})$ such that 
\begin{equation}\label{eq:derexp}
d(e_i)=\sum_{k=1}^{n} d_{ik}e_k
\end{equation}
is a derivation of $\A$  if, and only if, it satisfies the following conditions:
\begin{alignat}{1}
w_{jk}d_{ij}+ w_{ik}d_{ji}=0,&\,\,\, \text{ for }i,j,k\in\{1,\ldots,n\}\text{ such that }i\neq j,\label{eq:der1}\\
\sum_{k=1} ^{n} w_{ik}d_{kj}=2w_{ij}d_{ii}, &\,\,\, \text { for }i,j\in\{1,\ldots,n\}.\label{eq:der2}
\end{alignat}
We observe that the previous statement is independent of $\charr(\mathbb{K}).$ However, if  $\charr(\mathbb{K})=2$, then clearly the equation (\ref{eq:der2}) becomes

\begin{alignat*}{1}
\sum_{k=1} ^{n} w_{ik}d_{kj}=0, &\,\,\, \text { for }i,j\in\{1,\ldots,n\}.\label{eq:der3}
\end{alignat*}

The  equations (\ref{eq:der1}) and  (\ref{eq:der2}) are the starting point for calculating the derivations of an evolution algebra $\A$. For some genetic algebras the derivations has  already been described in \cite{costa1,costa2,gonshor,gonzales,holgate,farrukh, peresi}.  In the case of an evolution algebra, a complete characterization of such space is still an open question. The references \cite{Alsarayreh/Qaralleh/Ahmad/2017, PMP2,YPMP, cardoso, COT,  Elduque/Labra/2019, farrukh, Qaralleh/Mukhamedov/2019} provide a list of some of the most recent works in this subject. In \cite{COT} the authors prove that the space of derivations of $n$-dimensional complex evolution algebras with non-singular matrices is zero, and they describe the space of derivations of evolution algebras with matrices of rank $n-1$. In \cite{Elduque/Labra/2019} the authors study the derivations of evolution algebras with non-singular matrices for the case of fields with any characteristic. Although the approaches considered in \cite{Alsarayreh/Qaralleh/Ahmad/2017, PMP3,YPMP,cardoso, Qaralleh/Mukhamedov/2019} are different, they provide a useful contribution to the field. While \cite{cardoso} gives a complete characterization for the space of derivation on two-dimensional evolution algebras, and \cite{Alsarayreh/Qaralleh/Ahmad/2017,farrukh} consider the case of three-dimensional solvable and finite-dimensional nilpotent evolution algebras, \cite{PMP3} do it for the case of evolution algebras associated to graphs over a field of zero characteristic. The approach stated in \cite{PMP3} rely only on the structural properties of the considered graph. In \cite{YPMP} the authors study the space o derivations of some non-degenerating irreducible finite-dimensional evolution algebras depending on the twin partition of an associated directed graph and, finally, \cite{Qaralleh/Mukhamedov/2019}, provides a description of the derivations of three dimensional evolution Volterra algebras.

In this work we contribute with this line of research by providing a complete characterization of the space of derivations of an evolution algebra associated to a graph over a field of any characteristic. Therefore, we generalize the results obtained by \cite{PMP3}. This paper is organized as follows. In Section 2, we review  definitions and notation of Graph Theory and Evolution Algebras and  we states preliminary results.  Section 3 is devoted to state our main results.

\section{Preliminary definitions and notation}

We start with some standard definitions and notation of Graph Theory. A finite graph $G$ with $n$ vertices is a pair $(V,E)$ where $V:=\{1,\ldots,n\}$ is the set of vertices and $E \subseteq \{(i,j)\in V\times V:i\leq j\}$ is the set of edges. If $(i,j)\in E$  or $(j,i)\in E$ we say that $i$ and $j$ are {\it neighbors}, and we denote the set of neighbors of a vertex $i$ by $\mathcal{N}(i)$. In general, given a subset $U\subset V$, we denote $\mathcal{N}(U):=\{j\in V: j\in \mathcal{N}(i) \text{ for some }i\in U\}$, and $U^c:=\{j\in V: j\notin U\}$. The {\it degree} of vertex $i$, denoted by $\deg(i)$, is the cardinality of the set $\mathcal{N}(i)$. The {\it adjacency matrix}  $A_G=(a_{ij})$ of $G$ is an $n\times n$ symmetric matrix such that $a_{ij}=1$ if $i\in \mathcal{N}(j)$, and $a_{ij}=0$ otherwise. Note that, for any $k \in V$, $\mathcal{N}(k):=\{\ell \in V: a_{k\ell}=1\}$. We say that vertices $i$ and $j$ of a graph $G$ are {\it twins} if they have exactly the same set of neighbors, i.e. $\mathcal{N}(i)=\mathcal{N}(j)$. We notice that by defining the relation $\sim_{t}$ on the set of vertices $V$ by $i\sim_{t} j$ whether $i$ and $j$ are twins, then $\sim_{t}$ is an equivalence relation. An equivalence class of the twin relation is referred to as a {\it twin class}. In other words, the twin class of a vertex $i$, denoted by $\T(i)$, is the set $\T(i)=\{j\in V:i \sim_{t} j\}$. The set of all twin classes of $G$ is referred to as the {\it twin partition} of $G$. A graph is {\it twin-free} if it has no twins. A  {\it path from $i$ to $j$} is a  finite sequence of vertices $i_0,i_1,i_2,\ldots,i_n$, such that $i_0=i$, $i_n=j$ and $i_{k+1}\in \mathcal{N}(i_{k})$ for all $k\in\{0,1,\ldots,n-1\}$. In this case we say that $n$ is the {\it length} of the path. If $i=j$ we say that the path is a {\it cycle}. If the length of a cycle is $1$ we say that the cycle is a {\it loop}.  The minimum length of the paths connecting the vertices $i$ and $j$ is called {\it distance} between them and we denote it by $\text{d}(i,j)$. All the graphs we consider are {\it connected}, i.e. for any $i,j\in V$ there exists a path from $i$ to $j$. We consider only graphs which are simple, i.e. without multiple edges or loops. 

\smallskip
The evolution algebra associated to a given graph $G$, and denoted by $\A(G)$, is defined by letting $w_{ij}= a_{ij}$ for any $i,j\in V$.

\smallskip
\begin{defn}\label{def:eagraph}
Let $G=(V,E)$ be a graph with adjacency matrix  $A_G=(a_{ij})$. The evolution algebra associated to $G$ is the algebra $\A(G)$ with natural basis $S=\{e_i: i\in V\}$, and relations

\[
 \begin{array}{ll}\displaystyle
e_i \cdot e_i = \sum_{k\in V} a_{ik} e_k, \text{  for  }i \in  V,\\[.5cm]
e_i \cdot e_j =0,\text{ if }i\neq j.
\end{array}
\] 
\end{defn}
\smallskip
\begin{exa}\label{ex:1} Let $G$ be the graph of Figure \ref{fig_ex:1}. The evolution algebra $\A(G)$ associated to $G$ has natural basis $\{e_{1},e_{2},e_{3},e_{4},e_{5},e_{6},e_{7}\}$ and relations given by
$$e^{2}_{1}=e^{2}_{2}= e_3,\,\,\,e^{2}_{5}=e^{2}_{6}= e^{2}_{7}=e_4, \,\,\, e^{2}_{3}=e_{1}+e_{2}+e_4, \,\,\, e^{2}_{4}=e_3+e_{5}+e_{6}+e_{7}\text{ and } e_{i}\cdot e_{j}=0, \text{ for } i\neq j.$$

\end{exa}

\begin{figure}[!h]
\centering
\begin{tikzpicture}[scale=1]
\draw (0,0) -- (-1,1);
\draw (0,0) -- (-1,-1);
\draw (0,0) -- (1,0) -- (2,1);
\draw (1,0) -- (2,0);
\draw (1,0) -- (2,-1);
\filldraw [black] (-1,1) circle (1pt);
\filldraw [black] (-1,-1) circle (1pt);
\filldraw [black] (1,0) circle (1pt);
\filldraw [black] (2,1) circle (1pt);
\filldraw [black] (2,0) circle (1pt);
\filldraw [black] (2,-1) circle (1pt);
\filldraw [black] (0,0) circle (1pt);

\node[left] at (-1,1) {$^{1}$};
\node[left] at (-1,-1) {$^{2}$};
\node[above] at (0,0) {$_{3}$};
\node[above] at (1,0) {$_{4}$};
\node[right] at (2,1) {$^{5}$};
\node[right] at (2,0) {$^{6}$};
\node[right] at (2,-1) {$^{7}$};
\end{tikzpicture}

\caption{Graph of Example \ref{ex:1}. }\label{fig_ex:1}
\end{figure}
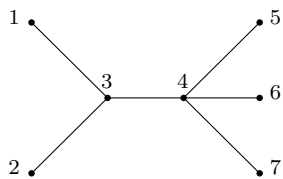
\smallskip
We refer the reader to \cite{PP,PMP,PMP2} for a review of recent results related to evolution algebras associated to graphs.
 
\bigskip
\subsection{Space of derivations of the evolution algebra of a graph}

Our aim is to characterize the space of derivations of an evolution algebra over fields of non-zero characteristic associated to a graph. In the rest of the paper we shall assume that $G=(V,E)$ is a finite graph with $n$ vertices and $d \in \Der_{\mathbb{K}}(\A(G))$ is definided as in \eqref{eq:derexp}. We will start with the following observation  related to  the simplest cases, which are when the graph has one or two vertices.

\smallskip
\begin{remark}\label{remark:1} 
Let $\mathbb{K}$ be a field. Let $G=(V,E)$ be a graph with adjacency matrix $A_G=(a_{ij})$. If $|V|=1$ as we consider only graphs without loops, we have $v\cdot u=0$ for all $u, v \in \A(G)$, and hence $\Der_{\mathbb{K}}(\A(G))=\mathcal{L}_{\mathbb{K}}(\A(G))$. If $|V|=2$, then since we are considering simple connected graphs, $G$ is necessarily the graph of Figure \ref{fig_remark:1}. Therefore, $a_{11}=a_{22}=0$ and $a_{12}=a_{21}=1$. If $d=(d_{ij})\in \Der_{\mathbb{K}}(\A(G))$,  taking $i=k=1$ and $j=2$ in \eqref{eq:der1} we have that $d_{12}=0$. Also, taking $i=k=2$ and $j=1$ we have $d_{21}=0$. Therefore $d$ is a diagonal matrix. Now, using \eqref{eq:der2} with $i\neq j$, we have 
\begin{equation}
    d_{11}=2d_{22} \text{ and } d_{22}=2d_{11}, \label{eq:V}
\end{equation}
which implies that $d_{11}=4d_{11}$ and $d_{22}=4d_{22}$. Hence, if $\charr (\mathbb{K}) \neq 3$, this implies that $d_{11}=d_{22}=0$ and therefore $ d=0$.  Otherwise, if  $\charr (\mathbb{K}) = 3$, from \eqref{eq:V} we conclude that  $d_{22}=2d_{11}$. Thus
\[\Der_{\mathbb{K}}(\A(G))= \left\{\left(\begin{matrix} \alpha & 0 \\ 0& 2\alpha\end{matrix}\right), \alpha \in \mathbb{K}
\right\}.\]

\end{remark}

\begin{figure}[!h]
\centering
\begin{tikzpicture}[scale=1]
\draw (0,0) -- (-1,0);
\filldraw [black] (0,0) circle (1pt);
\filldraw [black] (-1,0) circle (1pt);
\node[above] at (0,0) {$^{2}$};
\node[above] at (-1,0) {$^{1}$};
\end{tikzpicture}
\caption{Graph of Remark \ref{remark:1}.}\label{fig_remark:1}
\end{figure}
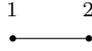
\smallskip

In what follows we will consider graphs with more than $2$ vertices, that is  $n\geq 3$. The next proposition provides
a look for the conditions \eqref{eq:der1} and  \eqref{eq:der2} in the context of evolution algebras associated to a graphs.

\smallskip
\begin{prop}\label{prop:conditions} Let $\mathbb{K}$ be a field with any characteristic and let $G=(V,E)$ be a  graph. Then $d\in \Der_{\mathbb{K}}\left(\A(G)\right)$ if, and only if, $d$ satisfies the following conditions:
\begin{enumerate}[label=(\roman*)]
\item \label{prop:conditions1} If $i,j\in V$, $i\neq j$, and $\mathcal{N}(i) \cap \mathcal{N}(j) \neq \emptyset,$ then $d_{ij}=-d_{ji}$.

\item \label{prop:conditions2} If $i,j\in V$, $i\neq j$, and $\mathcal{N}(i) \cap \mathcal{N}(j)^c \neq \emptyset,$ then $d_{ji}=d_{ij}=0$.
\item  \label{prop:conditions3} For any $i\in V$
\[
\sum_{k\in \mathcal{N}(i)} d_{kj}=\left\{
\begin{array}{cl}
0,&\text{ if }j\notin \mathcal{N}(i),\\[.2cm]
2d_{ii},&\text{ if }j\in \mathcal{N}(i).
\end{array}\right.
\]
\end{enumerate}
\end{prop}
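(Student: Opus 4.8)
The plan is to observe that for an evolution algebra associated to a graph one has $w_{ij}=a_{ij}\in\{0,1\}$, so the abstract derivation conditions \eqref{eq:der1} and \eqref{eq:der2} can be read off directly in terms of the neighbor sets $\mathcal{N}(i)$. I would prove the two implications separately, and within each I would treat \eqref{eq:der2} (which yields \ref{prop:conditions3}) and \eqref{eq:der1} (which yields \ref{prop:conditions1} and \ref{prop:conditions2}) independently.

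Condition \ref{prop:conditions3} is the easiest and I would dispose of it first: since $a_{ik}=1$ exactly when $k\in\mathcal{N}(i)$, the left-hand side of \eqref{eq:der2} is $\sum_{k\in\mathcal{N}(i)}d_{kj}$, while the right-hand side $2a_{ij}d_{ii}$ equals $2d_{ii}$ when $j\in\mathcal{N}(i)$ and $0$ otherwise. Thus \ref{prop:conditions3} is nothing but a restatement of \eqref{eq:der2}, and this equivalence is immediate in both directions and valid in any characteristic.

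For \ref{prop:conditions1} and \ref{prop:conditions2} I would fix $i\neq j$ and analyze \eqref{eq:der1}, namely $a_{jk}d_{ij}+a_{ik}d_{ji}=0$, according to the position of $k$ relative to $\mathcal{N}(i)$ and $\mathcal{N}(j)$. A common neighbor $k\in\mathcal{N}(i)\cap\mathcal{N}(j)$ forces $d_{ij}+d_{ji}=0$, giving \ref{prop:conditions1}; a vertex $k\in\mathcal{N}(i)\cap\mathcal{N}(j)^c$ forces $d_{ji}=0$, which is half of \ref{prop:conditions2}. For the backward implication this same case analysis, run in reverse, shows that \ref{prop:conditions1} and \ref{prop:conditions2} imply \eqref{eq:der1}: the four sign patterns of $(a_{ik},a_{jk})$ each make the expression vanish, where the pattern $(0,1)$ is handled by applying \ref{prop:conditions2} with the roles of $i$ and $j$ exchanged.

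The main obstacle is the full conclusion $d_{ij}=d_{ji}=0$ of \ref{prop:conditions2}, since the hypothesis $\mathcal{N}(i)\cap\mathcal{N}(j)^c\neq\emptyset$ only directly produces $d_{ji}=0$. To obtain $d_{ij}=0$ as well I would invoke connectedness: because $G$ is connected with $n\geq 3$, the vertex $j$ is not isolated, so there exists $\ell\in\mathcal{N}(j)$. If $\ell\in\mathcal{N}(i)$ then $\ell$ is a common neighbor and \ref{prop:conditions1} gives $d_{ij}=-d_{ji}=0$; if $\ell\notin\mathcal{N}(i)$ then $\ell\in\mathcal{N}(j)\cap\mathcal{N}(i)^c$ and \eqref{eq:der1} taken at $k=\ell$ gives $d_{ij}=0$. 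This use of the non-existence of isolated vertices is exactly where the graph-theoretic hypothesis enters, and it is the only non-formal step in the argument.
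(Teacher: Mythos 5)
Your proof is correct and follows essentially the same route as the paper: condition (iii) is read off directly from \eqref{eq:der2}, and conditions (i)--(ii) come from the same case analysis of \eqref{eq:der1} according to the sign pattern of $(a_{ik},a_{jk})$ that the paper uses for its converse. The only difference is that the paper outsources the forward implication to \cite[Proposition 3.1]{PMP3}, whereas you spell it out, correctly identifying and closing the one nontrivial gap there (obtaining $d_{ij}=0$ in condition (ii) by using connectedness to produce a neighbor $\ell$ of $j$ and splitting on whether $\ell\in\mathcal{N}(i)$).
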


\smallskip
\begin{proof}
If  $d\in \Der_{\mathbb{K}}\left(\A(G)\right)$ then the proof that $d$ satisfies conditions \ref{prop:conditions1} to \ref{prop:conditions3} is the same as the one  in \cite[Proposition 3.1]{PMP3}. Conversely, let $d \in \mathcal{L}_{\mathbb{K}}\left(\A(G)\right) $
satisfying the conditions \ref{prop:conditions1} to  \ref{prop:conditions3}. To prove that $d \in \Der_{\mathbb{K}}\left(\A(G)\right)$, we shall to check that $d$ verifies \eqref{eq:der1} and \eqref{eq:der2}. Let $i,j,k\in V$. To prove \eqref{eq:der1} we have to consider several cases: \newline
\smallskip
\noindent{\it Case 1.} If $k \in \mathcal{N}(i)^{c}\cap  \mathcal{N}(j)^{c}$, then  $a_{jk}=a_{ik}=0$. Therefore (\ref{eq:der1}) is true. \newline
\smallskip
\noindent{\it Case 2.} For $k \in \mathcal{N}(i)\cap \mathcal{N}(j)^{c}$ we  have that $a_{ik}=1$, $a_{jk}=0$ and $d_{ji}=0$, by  \ref{prop:conditions2}. Then (\ref{eq:der1}) it follows. For $k\in \mathcal{N}(i)^{c}\cap \mathcal{N}(j)$ the argument is analogous to that used in the previous situation.

\smallskip
\noindent{\it Case 3.} If $k \in \mathcal{N}(i)  \cap  \mathcal{N}(j)$, then $a_{jk}=a_{ik}=1$ and, by condition \ref{prop:conditions1},  $d_{ij}=-d_{ji}$. Thus we have $a_{jk}d_{ij}+ a_{ik}d_{ji}=d_{ij}-d_{ij}=0.$

\smallskip

To prove \eqref{eq:der2}, note that by \ref{prop:conditions3} we have that
\[ \begin{array}{rcl}
\sum_{k=1}^{n} a_{ik}d_{kj}
&= &\sum_{k \in \mathcal{N}(i)} d_{kj},\\
&=&\begin{cases}
0,&\text{ if }j\notin \mathcal{N}(i),\\
2d_{ii},&\text{ if }j\in \mathcal{N}(i), 
\end{cases} 
\\[.4cm]
&=&2a_{ij}d_{ii}.  \\
\end{array}
\]
for all $i,j,k\in V$.

\end{proof}

\smallskip
We point out that if $i,j \in V$  are such that  $i \not =j$ and $\mathcal{N}(i) \cap \mathcal{N}(j) =\emptyset $ then, as a consequence of \ref{prop:conditions2} in the above proposition ,  $d_{ij}=d_{ji}=0.$ On the other hand, if $\charr(\mathbb{K})=2$  then the condition \ref{prop:conditions3}  is written as
$$\sum_{k\in \mathcal{N}(i)} d_{kj}=0, \hspace{8pt} \text{ for all } i,j \in V.$$

The  above proposition extends the result in \cite[Proposition 3.1]{PMP3} where was proved that if the characteristic of the field is zero then the entries of a derivation satisfies the conditions
\ref{cor:conditions1}
to \ref{cor:conditions4}.
\smallskip

The statement of the following lemma, except for the characteristic of the field, is the same  as  \cite[Lemma 3.4]{PMP3}, which was  stated for fields with zero characteristic. But the same proof works for any characteristic. Then we will enunciate it without proof.

\smallskip

\begin{lemma}\label{lemma:algum}
Let $\mathbb{K}$ a field, let $G=(V,E)$ be graph,  and let $d\in \Der_{\mathbb{K}}\left(\A(G)\right)$. If $d_{ij}\neq 0$, for some $i,j\in V$ with $i\neq j$, then $i\sim_{t} j$.

\end{lemma}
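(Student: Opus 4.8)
The plan is to prove the contrapositive: assuming $i \neq j$ but $i \not\sim_t j$, I will show that $d_{ij} = 0$. Since $i$ and $j$ are not twins, their neighborhoods differ, so there exists a vertex $\ell$ witnessing the asymmetry, i.e. $\ell \in \mathcal{N}(i)$ but $\ell \notin \mathcal{N}(j)$ (or vice versa, which is symmetric in the roles of $i$ and $j$). The key observation is that such an $\ell$ lies in $\mathcal{N}(i) \cap \mathcal{N}(j)^c$, which is precisely the hypothesis of condition \ref{prop:conditions2} in Proposition \ref{prop:conditions}.

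First I would invoke the failure of the twin relation to extract the witness vertex. Writing $\mathcal{N}(i) \neq \mathcal{N}(j)$ explicitly, the symmetric difference $\mathcal{N}(i) \,\triangle\, \mathcal{N}(j)$ is nonempty, so without loss of generality I may pick $\ell \in \mathcal{N}(i) \setminus \mathcal{N}(j) = \mathcal{N}(i) \cap \mathcal{N}(j)^c$. (If instead $\ell \in \mathcal{N}(j) \setminus \mathcal{N}(i)$, the same argument applies after swapping $i$ and $j$, and since condition \ref{prop:conditions2} yields $d_{ji} = d_{ij} = 0$ symmetrically, the conclusion is unchanged.) The existence of this $\ell$ shows $\mathcal{N}(i) \cap \mathcal{N}(j)^c \neq \emptyset$.

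Then I would apply condition \ref{prop:conditions2} of Proposition \ref{prop:conditions} directly: since $i \neq j$ and $\mathcal{N}(i) \cap \mathcal{N}(j)^c \neq \emptyset$, we conclude $d_{ij} = d_{ji} = 0$. This is exactly the negation of the hypothesis $d_{ij} \neq 0$, completing the contrapositive and hence the lemma. Crucially, Proposition \ref{prop:conditions} holds over a field of any characteristic, so the argument carries over verbatim from the zero-characteristic case in \cite[Lemma 3.4]{PMP3}; the only conditions used are the purely combinatorial ones \ref{prop:conditions1} and \ref{prop:conditions2}, neither of which involves the scalar $2$ that would be sensitive to characteristic two.

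I do not anticipate a genuine obstacle here, since the statement reduces to a clean case analysis on the neighborhoods driven by Proposition \ref{prop:conditions}. The only point requiring a moment's care is the handling of the two possible directions of the asymmetry ($\ell$ distinguishing $i$ from $j$ versus $j$ from $i$), but condition \ref{prop:conditions2} is itself symmetric in $i$ and $j$ — it concludes $d_{ij} = d_{ji} = 0$ simultaneously — so a single \emph{without loss of generality} remark dispatches both cases. This is precisely why the zero-characteristic proof transfers unchanged, justifying the decision to omit it in the statement above.
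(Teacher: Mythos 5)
Your proof is correct and is essentially the argument the paper intends: the paper omits the proof, asserting that the characteristic-zero proof of \cite[Lemma 3.4]{PMP3} carries over verbatim, and that proof is exactly your contrapositive via Proposition \ref{prop:conditions}\ref{prop:conditions2} (non-twins have distinct neighborhoods, yielding a witness in $\mathcal{N}(i)\cap\mathcal{N}(j)^{c}$ or $\mathcal{N}(j)\cap\mathcal{N}(i)^{c}$, whence $d_{ij}=d_{ji}=0$). Your observation that only the characteristic-independent conditions \ref{prop:conditions1} and \ref{prop:conditions2} are used is precisely the justification the paper gives for stating the lemma over an arbitrary field.
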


An important result follows from this lemma:  if $i\not\sim_{t} j$ then $d_{ij}=0$. This shows that the matrix of a derivation is formed by blocks, according to the structure of the graph.

\begin{corollary}\label{cor:conditions}
Let  $\mathbb{K}$ be a field with $\charr(\mathbb{K})=p$, $p\not \in \{ 0,2\}$. Let  $G=(V,E)$ be a  graph and let $d\in \Der_{\mathbb{K}}\left(\A(G)\right)$. Then $d$ satisfies the following conditions:
\begin{enumerate}[label=(\roman*)]
\item If $i,\ell \in V$ and $i\sim_{t} \ell,$ then $d_{ii}=d_{\ell \ell}$.\label{cor:conditions1}
\item If $i\in V $ and $p \mid \deg(i)$, then
\begin{equation}
\sum_{k\in\mathcal{N}(i)}d_{kk} =0.
\end{equation} \label{cor:conditions2}
\item If $i\in V$ and $p \nmid \deg(i)$, then
\begin{equation}
2 \deg(i)d_{ii}= \sum_{k\in\mathcal{N}(i)}d_{kk}.
\end{equation} \label{cor:conditions3}
\item  Let $\ell \in V$. Then 
$\displaystyle \sum_{k \in \T(\ell)} d_{kj}=2d_{ii}$ for any $j \in \T(\ell)$ and $i\in \mathcal{N}(\T(\ell))$. \label{cor:conditions4}
\end{enumerate}
\end{corollary}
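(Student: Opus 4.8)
The plan is to read off all four statements from Proposition \ref{prop:conditions} and Lemma \ref{lemma:algum}, separating the treatment of the diagonal entries $d_{kk}$ from the off-diagonal ones. For \ref{cor:conditions1}, I would start from the fact that $i \sim_{t} \ell$ means $\mathcal{N}(i) = \mathcal{N}(\ell)$, a nonempty set since $G$ is connected with $n \geq 3$. Fixing any $j$ in this common neighborhood and applying condition \ref{prop:conditions3} of Proposition \ref{prop:conditions} once to $i$ and once to $\ell$ gives $2d_{ii} = \sum_{k \in \mathcal{N}(i)} d_{kj}$ and $2d_{\ell\ell} = \sum_{k \in \mathcal{N}(\ell)} d_{kj}$; the right-hand sides agree because the neighborhoods coincide, so $2d_{ii} = 2d_{\ell\ell}$, and cancelling the $2$ (legitimate since $p \neq 2$) yields $d_{ii} = d_{\ell\ell}$. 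This is the sole point at which $p \neq 2$ enters.

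Next I would derive the single identity
\begin{equation}\label{eq:diagsum}
\sum_{k \in \mathcal{N}(i)} d_{kk} = 2\deg(i)\, d_{ii},
\end{equation}
from which both \ref{cor:conditions2} and \ref{cor:conditions3} follow immediately. To obtain \eqref{eq:diagsum} I would sum the equation $\sum_{k \in \mathcal{N}(i)} d_{kj} = 2d_{ii}$ of \ref{prop:conditions3} over the $\deg(i)$ indices $j \in \mathcal{N}(i)$; the right-hand side becomes $2\deg(i)d_{ii}$, while the left-hand side is the double sum $\sum_{k,j \in \mathcal{N}(i)} d_{kj}$. For distinct $k, j \in \mathcal{N}(i)$ the vertex $i$ belongs to $\mathcal{N}(k) \cap \mathcal{N}(j)$, so \ref{prop:conditions1} gives $d_{kj} = -d_{jk}$ and the off-diagonal terms cancel in pairs, leaving precisely $\sum_{k \in \mathcal{N}(i)} d_{kk}$. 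Then \ref{cor:conditions2} is the case $p \mid \deg(i)$, where $\deg(i) = 0$ in $\mathbb{K}$ forces the right-hand side to vanish, and \ref{cor:conditions3} is merely \eqref{eq:diagsum} rewritten.

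For \ref{cor:conditions4}, given $i \in \mathcal{N}(\T(\ell))$ and $j \in \T(\ell)$, I would first note that since the vertices of $\T(\ell)$ are mutually twins they share a single neighborhood, so $\mathcal{N}(\T(\ell)) = \mathcal{N}(\ell)$ and $i \in \mathcal{N}(k)$ for every $k \in \T(\ell)$; equivalently $\T(\ell) \subseteq \mathcal{N}(i)$. In particular $j \in \mathcal{N}(i)$, so \ref{prop:conditions3} applied to $i$ gives $\sum_{k \in \mathcal{N}(i)} d_{kj} = 2d_{ii}$. To reduce the summation range to $\T(\ell)$, I would invoke Lemma \ref{lemma:algum}: any $k \in \mathcal{N}(i)$ with $k \notin \T(\ell)$ satisfies $k \neq j$ and $k \not\sim_{t} j$, hence $d_{kj} = 0$. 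Thus $\sum_{k \in \mathcal{N}(i)} d_{kj} = \sum_{k \in \T(\ell)} d_{kj}$, which gives the desired equality $\sum_{k \in \T(\ell)} d_{kj} = 2d_{ii}$.

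The step I expect to require the most care is the cancellation of off-diagonal terms in \eqref{eq:diagsum}: one must check that any two distinct neighbors of $i$ genuinely share a common neighbor (namely $i$ itself), which is exactly what makes the antisymmetry in \ref{prop:conditions1} applicable. The remaining work is bookkeeping with the twin relation, using Lemma \ref{lemma:algum} to zero out the entries that do not belong to a common twin class.
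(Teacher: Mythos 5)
Your proposal is correct and follows essentially the same route as the paper's proof: part (i) by applying Proposition \ref{prop:conditions}\ref{prop:conditions3} to the common neighborhood, parts (ii)--(iii) by summing that identity over $j\in\mathcal{N}(i)$ and cancelling the off-diagonal terms via the antisymmetry of \ref{prop:conditions1}, and part (iv) by restricting the sum to $\T(\ell)$ using Lemma \ref{lemma:algum}. The only (welcome) difference is that you make explicit two points the paper leaves implicit, namely the division by $2$ in (i) and the pairwise cancellation argument for the double sum.
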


\begin{proof}
If $i\sim_{t} \ell$ and $j \in \mathcal{N}(i)=\mathcal{N}(\ell)$, then  Proposition \ref{prop:conditions}\ref{prop:conditions3} provides that
$$2d_{ii}=\sum_{k\in \mathcal{N}(i)}d_{kj}=\sum_{k\in \mathcal{N}(\ell)}d_{kj}=2d_{\ell \ell},$$
which proves (i). On the other hand, if $i,j\in V$ and $j\in\mathcal{N}(i)$, by Proposition \ref{prop:conditions}\ref{prop:conditions3}, we have that
\begin{equation*}\label{eq:cor1}
2d_{ii}=\sum_{k\in\mathcal{N}(i)}d_{kj}.  
\end{equation*}
Since the  equation above holds for any $j\in \mathcal{N}(i)$, then
$$
\sum_{j\in\mathcal{N}(i)}2d_{ii}=\sum_{j\in\mathcal{N}(i)}\sum_{k\in\mathcal{N}(i)} d_{kj},
$$
that we can rewrite as follows
\begin{equation}\label{eq:cor2}
2 \deg(i) d_{ii} = \sum_{k\in\mathcal{N}(i)} d_{kk} + \sum_{\substack{j,k \in \mathcal{N}(i)\\j\neq k}}d_{kj}.
\end{equation}
We note that, by Proposition \ref{prop:conditions}(i), it is true that $$\displaystyle  \sum_{\substack{j,k \in \mathcal{N}(i)\\j\neq k}}d_{kj}=0.$$

Now we shall analyze two situations. If  $p \mid \deg(i)$, then \eqref{eq:cor2}  becomes 

$$\displaystyle \sum_{k\in\mathcal{N}(i)} d_{kk} =0,$$

which proves (ii). Otherwise, if $p \nmid \deg(i)$, then by \eqref{eq:cor2} we have that $$ 2\deg(i) d_{ii} = \sum_{k\in\mathcal{N}(i)} d_{kk},$$ which proves (iii). Let $i,j\in V $ such that $j \in \T(\ell)$ and $i \in \mathcal{N}(\T(\ell))$. Then $\T(\ell) \subset \mathcal{N}(i)$ and 
  $$\displaystyle \sum_{k \in \T(\ell)} d_{kj}=\sum_{k \in  \mathcal{N}(i)} d_{kj} - \sum_{k \in \mathcal{N}(i) \backslash \T(\ell)} d_{kj}.$$
But, if $k \in \mathcal{N}(i) \backslash \T(\ell)$, then $k \not\sim_{t} j$ and, by the previous lemma, $d_{kj}=0$. Also, by the Proposition \ref{prop:conditions}\ref{prop:conditions3} we have 
$$\displaystyle \sum_{k \in \T(\ell)} d_{kj}=\sum_{k \in \mathcal{N}(i)} d_{kj}=2d_{ii},$$
which proves \ref{cor:conditions4}.

\end{proof}

\smallskip

\begin{corollary}\label{cor:conditionsp2}

Let $\mathbb{K}$ be a field with $\charr(\mathbb{K})=2$. Let $G=(V,E)$ be a graph and let $d\in \Der_{\mathbb{K}}\left(\A(G)\right)$. Then 
 $$\displaystyle \sum_{k \in \T(\ell)} d_{kj}=0, \textrm{ for any }  \ell \in V, j \in \T(\ell) \textrm{ and } i\in \mathcal{N}(\T(\ell)).$$ 
\end{corollary}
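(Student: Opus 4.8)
The plan is to mimic the argument used for Corollary \ref{cor:conditions}\ref{cor:conditions4}, the only difference being that in characteristic $2$ the right-hand side $2d_{ii}$ appearing in Proposition \ref{prop:conditions}\ref{prop:conditions3} collapses to $0$, which is precisely what forces the stated identity.

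First I would fix $\ell \in V$, $j\in \T(\ell)$ and $i\in \mathcal{N}(\T(\ell))$, and record the structural fact that $\T(\ell)\subseteq \mathcal{N}(i)$. Indeed, since $i\in \mathcal{N}(\T(\ell))$, there exists $m\in\T(\ell)$ with $i\in\mathcal{N}(m)$; as every vertex of $\T(\ell)$ shares the same neighbors as $m$, we obtain $i\in\mathcal{N}(k)$ for all $k\in\T(\ell)$, that is, $\T(\ell)\subseteq\mathcal{N}(i)$. This inclusion lets me split
\[
\sum_{k\in\mathcal{N}(i)}d_{kj}=\sum_{k\in\T(\ell)}d_{kj}+\sum_{k\in\mathcal{N}(i)\setminus\T(\ell)}d_{kj}.
\]

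Next I would eliminate the second sum. For any $k\in\mathcal{N}(i)\setminus\T(\ell)$ we have $k\notin\T(\ell)=\T(j)$, hence $k\not\sim_{t}j$, and Lemma \ref{lemma:algum} yields $d_{kj}=0$. Consequently $\sum_{k\in\mathcal{N}(i)}d_{kj}=\sum_{k\in\T(\ell)}d_{kj}$.

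Finally I would invoke Proposition \ref{prop:conditions}\ref{prop:conditions3} in the characteristic-$2$ form recorded just after that proposition: since $2d_{ii}=0$ when $\charr(\mathbb{K})=2$, its right-hand side is $0$ regardless of whether $j\in\mathcal{N}(i)$, so $\sum_{k\in\mathcal{N}(i)}d_{kj}=0$. Combining this with the previous step gives $\sum_{k\in\T(\ell)}d_{kj}=0$, as claimed. There is no genuine obstacle here; the only point requiring attention is that, unlike the $p\neq 2$ case of Corollary \ref{cor:conditions}\ref{cor:conditions4}, one need not distinguish the subcases $j\in\mathcal{N}(i)$ and $j\notin\mathcal{N}(i)$, because both already produce $0$ once $2d_{ii}$ vanishes.
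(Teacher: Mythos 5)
Your proposal is correct and follows essentially the same route as the paper: both decompose $\sum_{k\in\mathcal{N}(i)}d_{kj}$ using the inclusion $\T(\ell)\subseteq\mathcal{N}(i)$, kill the terms outside $\T(\ell)$ via Lemma \ref{lemma:algum}, and then apply Proposition \ref{prop:conditions}\ref{prop:conditions3}, whose right-hand side $2d_{ii}$ vanishes in characteristic $2$. Your added justification of $\T(\ell)\subseteq\mathcal{N}(i)$ and the remark that no case distinction on $j\in\mathcal{N}(i)$ is needed are fine but do not change the argument.
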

\smallskip

\begin{proof}
We can now proceed analogously to the proof of Corollary  \ref{cor:conditions} \ref{cor:conditions4}. Let $i,j\in V $
  such that $j \in \T(\ell)$ and $i \in \mathcal{N}(\T(\ell))$. Then $\T(\ell) \subset \mathcal{N}(i)$
 and therefore
$$\displaystyle 
\sum_{k \in \T(\ell)} d_{kj}=\sum_{k \in \mathcal{N}(i)} d_{kj}-\sum_{k \in \mathcal{N}(i) \backslash \T(\ell)} d_{kj} =  \sum_{k \in \mathcal{N}(i)} d_{kj}. $$
Now, using Proposition \ref{prop:conditions}\ref{prop:conditions3} we have
$$\displaystyle \sum_{k \in \T(\ell)} d_{kj}=\sum_{k \in \mathcal{N}(i)} d_{kj}=0.$$

\end{proof}

\begin{lemma}\label{prop:necessarylemma}Let  $\mathbb{K}$ be a field with $\charr(\mathbb{K})=p$, $p\not =2$. Let  $G=(V,E)$ be a graph  and let $d\in \Der_{\mathbb{K}}\left(\A(G)\right)$.
 If $d_{k\ell}\neq 0$ for some $k,\ell\in V$, with $k\neq \ell$, then $|\T(\ell) |\geq 3$.
\end{lemma}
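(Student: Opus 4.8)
The plan is to argue by contradiction, reducing everything to the two conditions of Proposition \ref{prop:conditions} together with Lemma \ref{lemma:algum}. First I would record what the hypothesis already gives for free: since $d_{k\ell}\neq 0$ with $k\neq \ell$, Lemma \ref{lemma:algum} forces $k\sim_{t}\ell$, so both $k$ and $\ell$ lie in $\T(\ell)$ and hence $|\T(\ell)|\geq 2$. Thus it suffices to exclude the case $|\T(\ell)|=2$; so I assume $\T(\ell)=\{k,\ell\}$ and aim to deduce $d_{k\ell}=0$, which contradicts the hypothesis. Because $G$ is connected with $n\geq 3$, every vertex has a neighbor, so $\mathcal{N}(k)=\mathcal{N}(\ell)\neq\emptyset$; I fix a common neighbor $i\in \mathcal{N}(k)=\mathcal{N}(\ell)$, and by symmetry of the adjacency relation both $k$ and $\ell$ belong to $\mathcal{N}(i)$.

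Next I would evaluate Proposition \ref{prop:conditions}\ref{prop:conditions3} for the vertex $i$ at the two targets $j=k$ and $j=\ell$, both of which lie in $\mathcal{N}(i)$, so each right-hand side equals $2d_{ii}$. The decisive simplification of the left-hand sums $\sum_{m\in\mathcal{N}(i)}d_{mk}$ and $\sum_{m\in\mathcal{N}(i)}d_{m\ell}$ comes from Lemma \ref{lemma:algum}: an off-diagonal entry $d_{mk}$ can be nonzero only when $m\sim_{t}k$, i.e. $m\in\T(\ell)=\{k,\ell\}$. Under the assumption $|\T(\ell)|=2$ the only surviving terms are the diagonal entry and the single sibling entry, and since both $k,\ell\in\mathcal{N}(i)$ these are genuinely present. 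This yields $d_{kk}+d_{\ell k}=2d_{ii}$ and $d_{k\ell}+d_{\ell\ell}=2d_{ii}$.

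Finally I would combine the two identities. Subtracting them gives $(d_{kk}-d_{\ell\ell})+(d_{\ell k}-d_{k\ell})=0$. The diagonal entries of twins coincide, $d_{kk}=d_{\ell\ell}$, which is Corollary \ref{cor:conditions}\ref{cor:conditions1}; its one-line derivation from Proposition \ref{prop:conditions}\ref{prop:conditions3} via $\mathcal{N}(k)=\mathcal{N}(\ell)$ only cancels a common factor $2$ on both sides and so is valid for every $p\neq 2$, including $p=0$. Moreover $k$ and $\ell$ share a neighbor, so $\mathcal{N}(k)\cap\mathcal{N}(\ell)\neq\emptyset$ and Proposition \ref{prop:conditions}\ref{prop:conditions1} gives $d_{\ell k}=-d_{k\ell}$. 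Substituting both facts leaves $-2d_{k\ell}=0$, and since $\charr(\mathbb{K})=p\neq 2$ this forces $d_{k\ell}=0$, contradicting $d_{k\ell}\neq 0$. Hence $|\T(\ell)|\geq 3$.

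The step I expect to be the most delicate is the bookkeeping in the two neighbor-sums: one must verify that exactly the indices $k$ and $\ell$ contribute (the diagonal term plus the unique twin), which is precisely where the assumption $|\T(\ell)|=2$ enters, and that both these indices really lie in $\mathcal{N}(i)$. A secondary point requiring care is the characteristic-zero case, where the equality $d_{kk}=d_{\ell\ell}$ must be justified in a way that does not rely on $p$ being an odd prime; as noted, it holds for every $p\neq 2$.
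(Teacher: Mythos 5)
Your proof is correct and follows essentially the same route as the paper's: Lemma \ref{lemma:algum} to get $k\sim_t\ell$, then (assuming $|\T(\ell)|=2$) evaluating Proposition \ref{prop:conditions}\ref{prop:conditions3} at a common neighbor for the targets $k$ and $\ell$, and combining with $d_{kk}=d_{\ell\ell}$ and the skew-symmetry $d_{\ell k}=-d_{k\ell}$ to force $2d_{k\ell}=0$. Your added remark that the twin-diagonal equality must be rederived for $p=0$ (since Corollary \ref{cor:conditions} is stated only for $p\notin\{0,2\}$) is a careful touch the paper glosses over.
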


\begin{proof}
Without loss of generality assume $d_{12}\neq 0$. By Lemma \ref{lemma:algum}, $d_{12}\neq 0$ implies that $1\sim _{t}2 $, that is $1,2 \in \T(1)$. Therefore $|\T(1)|\geq 2$. Assume on the contrary that $|\T(1)|=2$. Let $m \in \mathcal{N} (1)=\mathcal{N}(2)$. By Proposition \ref{prop:conditions}\ref{prop:conditions3} we have that
$$\displaystyle 2d_{mm}=\sum_{j \in \mathcal{N}(m)} d_{j1}=d_{11}+d_{21}+\sum_{\substack{j \in \mathcal{N}(m)\\ j \neq 1,2}} d_{j1}.$$
By hypothesis, if  $j \in \mathcal{N}(m)\setminus \{1,2 \}$ then $j\not\sim_{t} 1$. Therefore, by Lemma \ref{lemma:algum}, $d_{j1}=0$. Thus,
\begin{equation} \displaystyle
2d_{mm}=d_{11}+d_{21}+\sum_{\substack{j \in \mathcal{N}(m)\\ j \neq 1,2 }} d_{j1}= d_{11}+d_{21}. \label{d1j}
\end{equation}
Analogously,
\begin{equation}  \displaystyle
2d_{mm}=\sum_{j \in \mathcal{N}(m)} d_{j2}=d_{12}+d_{22}+\sum_{\substack{j \in \mathcal{N}(m)\\ j \neq 1,2}} d_{j2}=d_{12}+d_{22}.\label{d2j}
\end{equation}
Then, by (\ref{d1j}) and (\ref{d2j}),  $d_{11}+d_{12} = d_{21}+d_{22}.$
But, by Corollary \ref{cor:conditions} \ref{cor:conditions1},  $d_{11} = d_{22}$. Therefore, $d_{12}=d_{21}$, and this contradicts Proposition \ref{prop:conditions} \ref{prop:conditions1}. This completes the proof.

\end{proof}

\smallskip
We point out that the previous lemma generalizes \cite[Lemma 3.5]{PMP3}. Moreover, our proof is much simpler  than it.

\smallskip
 
The previous lemma is not true if $p = 2$ as we will see in the following example.

\begin{exa} \label{ex:DerF2} Let $\mathbb{F}_2$ be the  Galois Field with $2$ elements and let  $G$ be a graph of  Figure \ref{fig_ex:1}. Direct computations show that the operator $d$ given by 
$$d= \left( \begin{array}{ccccccc}
1&1&0&0&0&0&0 \\
1&1&0&0&0&0&0 \\
0&0&0&0&0&0&0 \\
0&0&0&0&0&0&0 \\
0&0&0&0&1&1&0 \\
0&0&0&0&1&1&0 \\
0&0&0&0&0&0&0 \\
\end{array} \right)$$ 
is an element of $\Der_{\mathbb{K}}\mathcal{A}(G)$. Note that $|\T(1)|=2$ and $d_{12}\neq 0$. Also, note that $6 \sim_{t} 7$ and $d_{66} \neq d_{77}$, which show that Corollary \ref{cor:conditions}\ref{cor:conditions1} does not apply to $\charr{\mathbb{K}}=2$. 
\end{exa}

\bigskip
\section{Main Results}
In this section, we present the  complete characterization of the space  of  derivations  of  finite dimensional evolution algebras associated to graphs over a field with any  characteristic. Since this characterization is valid to zero characteristic, it constitutes a generalization of \cite[Theorem 2.6]{PMP3}. Before enunciating our main result, let us make some observations and fix notation.

\smallskip

Let $G=(V,E)$ be a graph, let  $B=\{e_{1}', \dots, e_{n}'\}$ be a natural basis for $\A(G)$ and  let $\{c_1, \dots, c_m \}$ be a complete set of representatives of $\sim_{t}$. Let
$$t_0:=0 \,\,\,\,\,\text { and }\,\,\,\,\, t_i := \displaystyle \sum_{\ell=1}^i|\T(c_{\ell})|,\,\,\,\, \text{ for } i =1, \dots, m.$$
Let  $\sigma: V \to V $ be  a permutation of $V$ such that if  $i\in \T(c_k)$  and $ \sigma(i)=j$ then  $t_{k-1} < j \leq t_k.$

Clearly, although it is not unique, there is at least one $\sigma \in S_{n}$ with this property.
Let $$e_{\sigma (i)}:=e'_{i}, $$ 
Then, using $\sigma$  we can label the vertices of $G$ in such a way that $\{t_{i-1}+1,\dots , t_i \}= \T(c_i) $,  for $i\in \{1,\ldots, m \}$; that is, in such a way that the elements of every twin class are consecutive.
Equivalently, using $\sigma$ we can reorder  the elements of $B$ in such a way that 
\[B=\displaystyle\{\underbrace{e_1, \dots, e_{t_1}}_{|\T(c_1)|},\underbrace{e_{t_1+1}, \dots , e_{t_2}}_{|\T(c_2)|}, \dots,\underbrace{e_{t_{m-1}+1}, \dots, e_{t_m}}_{|\T(c_m)|} \}.\]
Therefore, without lost of generality, we will assume in the remainder of this section  that the vertices of $G$ are labeled in such way that the elements of a twin class are consecutive. 
\smallskip

Let $C=(c_{ij}) \in M_n(\mathbb{K})$. We  will denote by $\overline{C}=( {\overline c}_{ij})$ the matrix such that

\[{\overline c}_{ij}:= \left \{ \begin{array}{rl} 
c_{ij}, & \text{ if } i \not= j,\\
0, & \text{ if } i = j.\\
\end{array} \right. \]

The next theorem, which is the main result of this work and  gives a complete characterization for the derivations of an evolution algebra associated to a graph.

\smallskip

\begin{theorem}\label{teo:principal} Let $\mathbb{K}$ be a field such that $\charr(\mathbb{K})\not= 0$. Let $G=(V,E)$ be a graph  and  let $\{c_1, \dots, c_m \}$ be  a complete set of representatives of the twin classes. Then  $d \in \mathcal{L}_{\mathbb{K}}\left(\A(G)\right)$ is also in $\Der_{\mathbb{K}}\left(\A(G)\right)$ if, and only if, satisfies the following conditions:
\begin{enumerate}[label=(\roman*)]
\item  \label{teo:principal1} $d$ is a block matrix with the form
$$\displaystyle d=\left( \begin{array}{cccc}
     C_1&0&\dots&0   \\
     0&C_2& \dots &0 \\
     \vdots&\vdots &\ddots &\vdots \\
     0&0&\dots &C_m
\end{array} \right),$$
where $C_i \in M_{u_{i}}(\mathbb{K}),$ for  $i \in \{1,\dots, m \}$ and $u_{i}:=|\T(c_i)|$.

\smallskip

\item \label{teo:principal2} If $j \in \T(c_i)$, with $i \in \{1,\dots, m \},$ then $$\displaystyle \sum_{k\in \T(c_i)} d_{kj}=2d_{tt}, \text{ for all } t \in \mathcal{N}(\T(c_i)).$$ 

\item \label{teo:principal3}  $\overline{C_i}$, for $i \in \{1,\dots, m \}$,  is skew-symmetric. 
\end{enumerate}
\end{theorem}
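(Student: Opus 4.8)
The plan is to prove the biconditional by leveraging the structural results already established in the excerpt, treating the characteristic $2$ and characteristic $p \neq 2$ cases in parallel where possible. First I would establish the forward direction (a derivation satisfies \ref{teo:principal1}--\ref{teo:principal3}). Condition \ref{teo:principal1} is essentially immediate from Lemma \ref{lemma:algum}: if $i \not\sim_t j$ then $d_{ij}=0$, so the nonzero off-diagonal entries of $d$ can only occur between vertices in the same twin class. Since we have reordered the basis so that each twin class $\T(c_i)$ occupies the consecutive index block $\{t_{i-1}+1,\dots,t_i\}$, the matrix $d$ must be block-diagonal with blocks $C_i \in M_{u_i}(\mathbb{K})$. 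Condition \ref{teo:principal2} is exactly Corollary \ref{cor:conditions}\ref{cor:conditions4} when $\charr(\mathbb{K})=p\neq 2$ and is a restatement of Corollary \ref{cor:conditionsp2} when $\charr(\mathbb{K})=2$ (noting that in the latter the right-hand side $2d_{tt}$ vanishes). Condition \ref{teo:principal3} follows from Proposition \ref{prop:conditions}\ref{prop:conditions1}: whenever $k,j$ are distinct twins, they share a common neighbor (assuming $\mathcal{N}(\T(c_i))\neq\emptyset$, which holds since $G$ is connected with $n\geq 3$), so $d_{kj}=-d_{jk}$, giving skew-symmetry of the off-diagonal part of each block.

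For the converse, I would assume $d \in \mathcal{L}_{\mathbb{K}}(\A(G))$ satisfies \ref{teo:principal1}--\ref{teo:principal3} and verify that $d$ fulfills the three conditions of Proposition \ref{prop:conditions}, which by that proposition is equivalent to $d \in \Der_{\mathbb{K}}(\A(G))$. Condition \ref{prop:conditions}\ref{prop:conditions1} asks that $d_{ij}=-d_{ji}$ when $i\neq j$ and $\mathcal{N}(i)\cap\mathcal{N}(j)\neq\emptyset$: if $i,j$ lie in the same twin class this is precisely the skew-symmetry of $\overline{C_i}$ from \ref{teo:principal3}; if they lie in different twin classes then $d_{ij}=d_{ji}=0$ by the block structure \ref{teo:principal1}, and the identity holds trivially. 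Condition \ref{prop:conditions}\ref{prop:conditions2} requires $d_{ij}=d_{ji}=0$ when $\mathcal{N}(i)\cap\mathcal{N}(j)^c\neq\emptyset$; here I would argue that this hypothesis forces $\mathcal{N}(i)\neq\mathcal{N}(j)$, hence $i\not\sim_t j$, so $i$ and $j$ lie in distinct blocks and $d_{ij}=d_{ji}=0$ by \ref{teo:principal1}.

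The most delicate step is verifying Proposition \ref{prop:conditions}\ref{prop:conditions3}, namely that for each $i\in V$ one has $\sum_{k\in\mathcal{N}(i)} d_{kj}$ equal to $0$ when $j\notin\mathcal{N}(i)$ and equal to $2d_{ii}$ when $j\in\mathcal{N}(i)$. The idea is to split the neighborhood $\mathcal{N}(i)$ into its intersections with the various twin classes: since $\mathcal{N}(i)$ is a union of full twin classes (if $k\in\mathcal{N}(i)$ and $k\sim_t k'$ then $i\in\mathcal{N}(k)=\mathcal{N}(k')$ so $k'\in\mathcal{N}(i)$), we can write $\mathcal{N}(i)=\bigsqcup_{r} \T(c_r)$ over those classes contained in it. For a fixed column index $j$, the block structure \ref{teo:principal1} kills every term $d_{kj}$ with $k$ outside the twin class of $j$, so $\sum_{k\in\mathcal{N}(i)} d_{kj}$ collapses to $\sum_{k\in\T(c_{r_0})} d_{kj}$ where $\T(c_{r_0})$ is the class of $j$ (when that class is among the $\T(c_r)\subseteq\mathcal{N}(i)$) and to $0$ otherwise. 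In the first case $j\in\mathcal{N}(i)$ forces $\T(c_{r_0})\subseteq\mathcal{N}(i)$, i.e. $i\in\mathcal{N}(\T(c_{r_0}))$, and condition \ref{teo:principal2} yields exactly $2d_{ii}$. In the second case $j\notin\mathcal{N}(i)$ means the class of $j$ is not contained in $\mathcal{N}(i)$, so the sum is empty and equals $0$. The main obstacle I anticipate is bookkeeping the interaction between the column index $j$ and the row index $i$ carefully, and in particular confirming that whenever $j\in\mathcal{N}(i)$ the hypothesis of \ref{teo:principal2} (that $i$ be a neighbor of the whole class $\T(c_{r_0})$) is automatically met; this rests on the fact that neighborhoods are twin-invariant, which I would state and use explicitly.
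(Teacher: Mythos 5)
Your proposal is correct and follows essentially the same route as the paper: the forward direction cites Lemma \ref{lemma:algum}, Corollaries \ref{cor:conditions}\ref{cor:conditions4} and \ref{cor:conditionsp2}, and Proposition \ref{prop:conditions}\ref{prop:conditions1}, and the converse verifies the three conditions of Proposition \ref{prop:conditions} by the same case split, collapsing $\sum_{k\in\mathcal{N}(i)}d_{kj}$ onto the twin class of $j$ and taking $t=i$ in condition \ref{teo:principal2}. Your explicit remark that $\mathcal{N}(i)$ is a union of full twin classes is a point the paper leaves implicit, but it is the same argument.
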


\begin{proof} Let $t_0:=0$  and 
$$ t_i := \displaystyle \sum_{\ell=1}^i|\T(c_{\ell})|, \,\,\,\,\, i\in \{ 1, \ldots, m\}. $$ 
We can assume that the vertex of $G$ are labeled in such a way that $\{t_{j-1}+1,\dots , t_j \}= \T(c_j)$,
for $j\in \{ 1, \ldots, m\}.$ Let $d\in \Der_{\mathbb{K}}(\A(G))$.
If  $i,k \in V$  are such that $i \in \T(c_j)$ and $k \in \T(c_j)^{c}$, for some  $j \in \{1, \dots, m\}$ then $d_{ik}=d_{ki}=0,$ by Lemma \ref{lemma:algum}. Defining
$$\displaystyle C_i:=\left( \begin{array}{cccc}
     d_{t_{i-1}+1,t_{i-1}+1}&d_{t_{i-1}+1,t_{i-1}+2}&\dots&d_{t_{i-1}+1,t_{i}}   \\
     d_{t_{i-1}+2,t_{i-1}+1}&d_{t_{i-1}+2,t_{i-1}+2}& \dots &d_{t_{i-1}+2,t_i} \\
     \vdots&\vdots &\ddots &\vdots \\
     d_{t_i,t_{i-1}+1}&d_{t_i,t_{i-1}+2}&\dots &d_{t_{i},t_{i}}
\end{array} \right), $$
for $i\in \{1,\ldots, m\}$
 we have that $d$ has the form required in \ref{teo:principal1}.

The statement \ref{teo:principal2} follows from Corollary \ref{cor:conditions} \ref{cor:conditions4}, for $p \neq 2$, and from the  Corollary  \ref{cor:conditionsp2}, for  $p=2$. Finally, the  statement \ref{teo:principal3} follows from Proposition \ref{prop:conditions} \ref{prop:conditions1}.

Conversely, we will assume that $d$ satisfies the conditions (i) to (iii). To prove that  $d\in \Der_{\mathbb{K}}\left(\A(G)\right)$, we will see that $d$ verifies the conditions of Proposition \ref{prop:conditions}. First, we claim that if $d_{ij}\not= 0$ then  $i\sim_{t} j$. In fact, by the blocks structure of $d$, if $d_{ij} \neq 0$ then $d_{ij} $ is an entry of $C_{k}$,  for some $k \in \{1, \dots, m \}$. Consequently,
$$\displaystyle    t_{k-1} <i \leq t_k \text{ and } t_{k-1} <j \leq t_k,$$ and therefore $i,j \in \T(c_k)$. 
Let $i,j\in V$, with $i\neq j$. If  $\mathcal{N}(i) = \mathcal{N}(j)$ then $i \sim_{t} j$ and  by our assumption (iii), we have $d_{ij}=-d_{ji}$. Otherwise, if $\mathcal{N}(i) \neq \mathcal{N}(j)$ then $i \not\sim_{t} j$ and so $d_{ij}=d_{ji}=0$, which proves that $d$ satisfies conditions  \ref{prop:conditions1} and \ref{prop:conditions2} of  Proposition \ref{prop:conditions}. Finally, to prove 
the last condition, we will consider two cases. Let $i,j \in V$.

\smallskip
\noindent {\it Case 1.} If $j \not\in \mathcal{N}(i)$, then  $j \not\sim_{t} k $, for all $k \in \mathcal{N}(i)$ and therefore  $d_{kj}=0$. Thus 
$ \displaystyle \sum_{k\in \mathcal{N}(i)} d_{kj}= 0.$

\smallskip
\noindent{\it Case 2.} If $j \in \mathcal{N}(i)$, then  
$$ \displaystyle \sum_{k\in \mathcal{N}(i)} d_{kj}= \sum_{k\in \mathcal{N}(i)\backslash\T(c_s)} d_{kj}+ \sum_{k\in \T(c_s)} d_{kj}= \sum_{k\in \T(c_s)} d_{kj}.$$
where $s \in \{1, \dots,m \}$ is such that  $j\in \T(c_s)$. Our assumption \ref{teo:principal2} implies that 
  $$\displaystyle \sum_{k\in \T(c_s)} d_{kj}=2d_{tt},\text{ for all } t\in \mathcal{N}(\T(c_s)).$$
Choosing $t= i$ we have the desired equality.

\end{proof}

\begin{remark}
We emphasize that the claim of the previous theorem is also true if the characteristic of the field is zero. In fact, if $d \in \Der_{\mathbb{K}}(\A(G))$ and   $\charr(\mathbb{K})=0$, then we have by \cite[Theorem 2.6]{PMP3} that $d$ satisfies the conditions \ref{teo:principal1} to \ref{teo:principal3}.  On the other hand, the arguments used to prove the other implication, are independent of the characteristic of $\mathbb{K}$.

\end{remark}

\begin{corollary}\label{corol:p>3} 

Let $\mathbb{K}$ be a field such that $\charr(\mathbb{K})= p$, with $p\not \in \{0,3\}$ and  let $G=(V,E)$ be a graph. If
$d\in \Der\left(\A(G)\right)$ is such that  $d_{ij}=d_{ji}=0$ for any $i,j\in V$, $i\neq j$, then $d_{ii}=0$ for any $i\in V$. 
\end{corollary}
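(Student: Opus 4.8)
The plan is to exploit the hypothesis that $d$ is diagonal in order to collapse condition \ref{prop:conditions3} of Proposition \ref{prop:conditions} into a simple numerical relation between the diagonal entries at adjacent vertices. First I would fix any $i\in V$ and any $j\in\mathcal{N}(i)$. Since by assumption $d_{k\ell}=0$ whenever $k\neq\ell$, every summand $d_{kj}$ with $k\neq j$ vanishes, so the sum $\sum_{k\in\mathcal{N}(i)}d_{kj}$ reduces to the single term $d_{jj}$. Proposition \ref{prop:conditions}\ref{prop:conditions3} then reads $d_{jj}=2d_{ii}$ for every ordered pair of adjacent vertices $i,j$.

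Next I would apply this relation along a single edge in both directions. If $j\in\mathcal{N}(i)$ then also $i\in\mathcal{N}(j)$, so simultaneously $d_{jj}=2d_{ii}$ and $d_{ii}=2d_{jj}$. Substituting the second into the first gives $d_{jj}=4d_{jj}$, that is $3d_{jj}=0$. Because $\charr(\mathbb{K})=p\notin\{0,3\}$, the scalar $3$ is nonzero, hence invertible, in $\mathbb{K}$, and therefore $d_{jj}=0$. (For $p=2$ the conclusion is even more immediate, since $d_{jj}=2d_{ii}=0$ directly; the displayed argument handles all admissible $p$ uniformly, as $3\neq 0$ also in characteristic $2$.)

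Finally I would invoke connectedness. Since $G$ is connected and $n\geq 3$, every vertex $j\in V$ is incident to at least one edge, so the previous step applies to each $j$ and yields $d_{jj}=0$ for all $j\in V$. Combined with the hypothesis that $d_{ij}=d_{ji}=0$ for all $i\neq j$, this shows $d=0$, and in particular $d_{ii}=0$ for every $i\in V$, as required.

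I do not expect a genuine obstacle in this corollary; the only point requiring care is ensuring that the key relation $d_{jj}=2d_{ii}$ is available for \emph{every} vertex, which is precisely where the standing assumption that $G$ is connected (so that no vertex is isolated) enters. If one were to drop connectedness, an isolated vertex would carry an arbitrary diagonal entry, consistent with Remark \ref{remark:1} for $|V|=1$, and the statement would fail; hence this hypothesis is essential to the argument.
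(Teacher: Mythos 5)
Your proof is correct and follows essentially the same route as the paper: both reduce the sum in the derivation condition to the single diagonal term, obtain $d_{jj}=2d_{ii}$ for adjacent vertices $i,j$, and conclude $3d_{ii}=0$, hence $d_{ii}=0$ since $\charr(\mathbb{K})\neq 3$. The only difference is cosmetic: the paper splits into the cases $p=2$ (handled via Proposition \ref{prop:conditions}\ref{prop:conditions3}) and $p>3$ (handled via Theorem \ref{teo:principal}\ref{teo:principal2}), whereas you derive the same relation uniformly from Proposition \ref{prop:conditions}\ref{prop:conditions3} alone.
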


\begin{proof} Let $i,j \in V$ such that  $j\in \mathcal{N}(i)$. First we assume that $p=2$.  By Proposition \ref{prop:conditions}\ref{prop:conditions3} we have
$$0=\sum_{k \in \mathcal{N}(j)} d_{ki}=\sum_{\substack{k \in \mathcal{N}(j)\\ k \neq i}} d_{ki}+d_{ii}=d_{ii}.$$

If $p>3$, by Theorem \ref{teo:principal}\ref{teo:principal2}, we have that 
 $$2d_{ii}=\displaystyle \sum_{k \in \T (i)}d_{kj}=d_{jj}\,\,\,\,\, \text{ and }\,\,\, \,\, 2d_{jj}=\displaystyle \sum_{k \in \T (j)}d_{ki}=d_{ii}.$$ 
Then $4d_{ii}=d_{ii}$ which implies that $d_{ii}=0$.

\end{proof}

\smallskip

Note that this result give us that there is not a diagonal derivation if $p\neq 0,3$. In \cite{PMP3} it has been proved that it is also valid for $p=0$, then the only possible exception is for $p=3$.

\begin{exa} Let  $\mathbb{K}$  be a field with  $\charr(\mathbb{K})=3$ and  let $K_{23}$ be the complete bipartite graph (see Figure \ref{fig2}). If $d\in \Der(\A(K_{23}))$, then by Theorem \ref{teo:principal}\ref{teo:principal1} and \ref{teo:principal3}  we have that
$$ d=\left(\begin{array}{ccccc}
    d_{11}&d_{12}&d_{13}&0&0\\
    -d_{12}&d_{22}&d_{23}&0&0\\
    -d_{13}&-d_{23}&d_{33}&0&0 \\
    0&0&0&d_{44}&d_{45}\\
    0&0&0&-d_{45}&d_{55}
\end{array} \right). $$
On the other hand, by Theorem \ref{teo:principal}\ref{teo:principal2}, we have 
\begin{eqnarray*}
d_{11}-d_{12}-d_{13}&=&2d_{44}=2d_{55} ;\\
d_{12}+d_{22}-d_{23}&=&2d_{44}=2d_{55} ;\\
d_{13}+d_{23}+d_{33}&=&2d_{44}=2d_{55}; \\
d_{44}-d_{45}&=&2d_{11}=2d_{22}=2d_{33} ;\\
d_{45}+d_{55}&=&2d_{11}=2d_{22}=2d_{33}.
\end{eqnarray*}
Defining $\alpha:= d_{44}$ and $\beta :=d_{12}$  we  can rewrite $d$ as
$$d=\left(\begin{array}{ccccc}
    2\alpha &\beta&-\beta&0&0\\
    -\beta&2\alpha&\beta&0&0\\
    \beta&-\beta&2\alpha&0&0 \\
    0&0&0&\alpha&0\\
    0&0&0&0&\alpha
\end{array}{} \right). $$
If $\alpha =1$ and $\beta = 0$, then $d$ is a block matrix, which implies that Lemma \ref{corol:p>3} indeed doesn't work for $p=3$.
\end{exa}
\smallskip
\begin{figure}[!h]\label{fig2}
    \centering
\begin{tikzpicture}[scale=1]
\draw (-1,-1) -- (-2,1);
\draw (-1,-1) -- (0,1);
\draw (-1,-1) -- (2,1);
\draw (1,-1) -- (-2,1);
\draw (1,-1) -- (0,1);
\draw (1,-1) -- (2,1);

\filldraw [black] (0,1) circle (1pt);
\filldraw [black] (-1,-1) circle (1pt);
\filldraw [black] (1,-1) circle (1pt);
\filldraw [black] (2,1) circle (1pt);
\filldraw [black] (-2,1) circle (1pt);
\node[above] at (-2,1) {$1$};
\node[above] at (0,1) {$2$};
\node[above] at (2,1) {$3$};
\node[below] at (-1,-1) {$4$};
\node[below] at (1,-1) {$5$};
\end{tikzpicture}
\caption{Complete bipartite graph $K_{23}.$}
\end{figure}
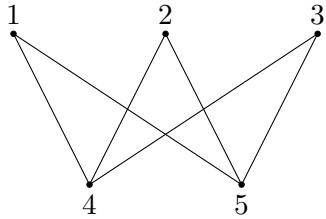

\smallskip

\begin{theorem}

 Let $\mathbb{K}$ be a field. Let $G=(V,E)$ be a  graph such that $\A(G)^2=\A(G)$. Then $ \Der(\A(G))$  satisfies the following properties:
\begin{enumerate}[label=(\roman*)]
\item \label{prop:elduque1} If $\charr (\mathbb{K})\neq 3,$ then $\Der_{\mathbb{K}}(\A(G))=0$.
\item \label{prop:elduque2} If $\charr (\mathbb{K})= 3,$ then $\dim_{\mathbb{K}}\Der_{\mathbb{K}}(\A(G)) \leq 1$ and the equality holds if, and only if, $G$ has no cycles of odd length. In this case $\Der_{\mathbb{K}}(\A(G))=\spann_{\mathbb{K}}\{ f \}$, where $f$ is the diagonal linear map $f=(f_{ij})$ defined by
\begin{equation}\label{fdim1}
f_{ii}:=\left\{
\begin{array}{cl}
1,&\text{ if } i=1,\\[.2cm]
2,&\text{ if } \text{d}(i,1)  \text{ is odd,}  \\[.2cm]
1,&\text{ if } \text{d}(i,1) \text{ is even.}  \\[.2cm]
\end{array}\right.  
\end{equation}

\end{enumerate}
\end{theorem}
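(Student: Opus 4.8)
The plan is to reduce everything to diagonal derivations and then to solve a simple multiplicative recursion along the edges of $G$. First I would observe that the hypothesis $\A(G)^2=\A(G)$ means exactly that the squares $e_i\cdot e_i=\sum_{k}a_{ik}e_k$ span $\A(G)$, i.e. the rows of $A_G$ span $\mathbb{K}^n$ and $A_G$ is non-singular. If $G$ had two distinct twins $i\sim_t j$, the $i$-th and $j$-th rows of $A_G$ would coincide and $A_G$ would be singular; hence $G$ is twin-free and every twin class is a singleton. By Theorem \ref{teo:principal}\ref{teo:principal1} each block $C_i$ is then $1\times 1$, so every $d\in\Der_{\mathbb{K}}(\A(G))$ is a diagonal matrix. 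From this point on only the diagonal entries $d_{ii}$ matter.

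For part \ref{prop:elduque1}, since $d$ is diagonal we have $d_{ij}=d_{ji}=0$ for all $i\neq j$. When $\charr(\mathbb{K})=p\notin\{0,3\}$, Corollary \ref{corol:p>3} forces $d_{ii}=0$ for every $i$, so $d=0$; when $\charr(\mathbb{K})=0$ the same vanishing is the statement recorded in \cite{PMP3} and recalled just after Corollary \ref{corol:p>3}. In either case $\Der_{\mathbb{K}}(\A(G))=0$.

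For part \ref{prop:elduque2}, let $\charr(\mathbb{K})=3$. In the twin-free situation Theorem \ref{teo:principal}\ref{teo:principal2} reduces to $d_{jj}=2d_{tt}$ for every edge $\{j,t\}$, and since $2^{-1}=2$ in characteristic $3$ this is the symmetric relation $d_{tt}=2d_{jj}$ along each edge. Propagating it along a path $1=i_0,i_1,\dots,i_\ell=i$ gives $d_{ii}=2^{\ell}d_{11}$, so by connectedness $d$ is determined by the single scalar $d_{11}$ and therefore $\dim_{\mathbb{K}}\Der_{\mathbb{K}}(\A(G))\leq 1$. Whether a nonzero such $d$ exists is decided by the consistency of this recursion over all paths. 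If $G$ has a cycle of odd length $L$ through a vertex $v$, traversing it returns $d_{vv}=2^{L}d_{vv}=2d_{vv}$ (as $2^{L}=2$ for odd $L$), forcing $d_{vv}=0$ and, by connectedness, $d=0$; hence $\dim=0$. If instead $G$ has no odd cycle it is bipartite, so adjacent vertices lie at distances of opposite parity from $1$; then the map $f$ of \eqref{fdim1} is well defined and satisfies $f_{ii}=2f_{tt}$ on every edge. Since conditions \ref{teo:principal1} and \ref{teo:principal3} are vacuous for $1\times 1$ blocks, Theorem \ref{teo:principal} yields $f\in\Der_{\mathbb{K}}(\A(G))$, so $\Der_{\mathbb{K}}(\A(G))=\spann_{\mathbb{K}}\{f\}$ has dimension exactly $1$. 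Combining the two cases gives the claimed equivalence.

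The one genuinely delicate point is the odd-cycle dichotomy: proving that the edge recursion $d_{tt}=2d_{jj}$ admits a nonzero solution if and only if $G$ is bipartite, and then checking that the explicit distance-based map $f$ really satisfies the edge condition of Theorem \ref{teo:principal}\ref{teo:principal2}. Both rest on the standard fact that a connected graph is bipartite exactly when it has no odd cycle, together with the parity bookkeeping $2^{\text{even}}=1$, $2^{\text{odd}}=2$ in characteristic $3$; everything else is a routine application of the structural results already proved.
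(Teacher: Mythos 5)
Your proposal is correct and follows essentially the same route as the paper: non-singularity of $A_G$ forces $G$ to be twin-free, hence every derivation is diagonal; Corollary \ref{corol:p>3} (together with the characteristic-zero result of \cite{PMP3}) settles part \ref{prop:elduque1}; and for $\charr(\mathbb{K})=3$ the edge relation $d_{jj}=2d_{tt}$ from Theorem \ref{teo:principal}\ref{teo:principal2} is propagated along paths to get $\dim\leq 1$, with the odd-cycle/bipartite dichotomy deciding whether the explicit map $f$ of \eqref{fdim1} realizes the equality. The only cosmetic difference is that you phrase the upper bound as ``$d$ is determined by $d_{11}$'' while the paper shows two nonzero derivations are proportional, which is the same argument.
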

\begin{proof}  We point out that the  assumption $\A(G)^2=\A(G)$ is equivalent  to say  that the structure matrix $A_G=(a_{ij})$ of $\A(G)$ is non-singular. Note that $i \not \sim_{t}  j$, for all $i,j \in V$. In other case, if $i \sim_{t} j $ for some $i, j \in V$ with $i\neq j$, then the rows $i$ and $j$ of $A_G$ are equal, thus $\rank(A_G)\neq n$. 

Let $d=(d_{ij}) \in \Der_{\mathbb{K}}(A(G))$. If $\charr (\mathbb{K})=0$, then $d=0$ by \cite[Theorem 2.3]{PMP3} (or by  \cite[Theorem 2.1]{COT}). If $\charr (\mathbb{K}) \not\in \{0, 3 \}$, by Lemma \ref{lemma:algum}, we have $d_{ij}=0$ for all $i,j \in V$  with $i\neq j$. Furthermore, by Corollary \ref{corol:p>3}, $d=0$, which proves \ref{prop:elduque1}.

Since $G$ is connected, for $i,k \in V$,
 there exist a path $k=j_{0},j_{1},j_{2}, \dots, j_{t}=i$ connecting $k$ to $i$. Then, by Theorem \ref{teo:principal} \ref{teo:principal2}, we have that
\begin{equation}\label{eq:caminho}
d_{kk}=2d_{j_1j_1}=d_{j_2j_2}=2d_{j_3j_3}=\dots = \left\{
\begin{array}{cl}
2d_{ii},&\text{ if } t \text{ is odd},\\[.2cm]
d_{ii},&\text{ if } t \text{ is even}.
\end{array}\right.
\end{equation}
Thus if $d_{jj}=0$ for some, $j \in V$, then $d=0$.

Suppose that $ \Der_{\mathbb{K}}(\A(G)) \neq 0$. Let  $d,d'\in\Der_{\mathbb{K}}(\A(G))$ such that both are not null  and   $d\neq d'$. Let $i\in V$. Then since $d_{ii}\not =0$ and $d'_{ii}\not =0$  there is $\lambda \in \mathbb{K}$, $\lambda \not =0$, such that $d'_{ii}=\lambda d_{ii}$. On the other hand,  for $k\in V$ there is a  path $k=j_{0},j_{1},j_{2}, \dots, j_{t}=i$, and  then by (\ref{eq:caminho}), we have 
$$d'_{kk} = \left\{
\begin{array}{cl}
2d'_{ii},&\text{ if } t \text{ is odd},\\[.2cm]
d'_{ii},&\text{ if } t \text{ is even}.
\end{array}\right.
=\left\{
\begin{array}{cl}
2\lambda d_{ii},&\text{ if } t \text{ is odd},\\[.2cm]
\lambda d_{ii},&\text{ if } t \text{ is even}.
\end{array}\right.= \lambda d_{kk}.
$$
Therefore $d'=\lambda d$, that is $\dim_{\mathbb{K}}\Der(\A(G)) = 1$.

Suppose that $G$ has a cycle of odd length $k=j_{0},j_{1},j_{2}, \dots , j_{t}=k$. Then, by (\ref{eq:caminho}), $d_{kk}=2d_{kk}$ and therefore $d=0$. Conversely, if $G$ has no cycles of odd length, consider the diagonal linear map $f=(f_{ij})$  defined in (\ref{fdim1}).

Let $j, \ell, t \in V$ be such that $j\in \T(\ell)$ and $t \in \mathcal{N}(\T(\ell))$. If $f_{jj}=2$, then $\text{d}(j,1)$ is odd, and hence  $\text{d}(t,1)$ is even and thus $f_{t t }=1$, otherwise we would have a cycle of odd length in $1$, which contraries the hypothesis. Thus
$$\displaystyle\sum_{k \in \T(\ell)}f_{kj}= f_{jj}=2f_{t t},$$
which proves that $f$ satisfies Theorem \ref{teo:principal}\ref{teo:principal2}.  If $f_{jj}=2$, an analogous argument provides the same result. Is not difficult to see that $f$  satisfies conditions \ref{teo:principal1} and \ref{teo:principal3} of Theorem \ref{teo:principal} so $f\in \Der_{\mathbb{K}}(\A(G))$ and therefore $\dim_{\mathbb{K}}\Der_{\mathbb{K}}(\A(G))= 1$.
\end{proof}

In \cite[Theorem 4.1]{Elduque/Labra/2019}  the authors describes the dimension of the  space of derivations of an evolution algebra  with non singular structure matrix using an oriented graph associated to a algebra and the balance of that graph. The previous theorem  gives a more specific description of this space for the case of
an evolution algebra associated to a graph.

\begin{corollary}
Let $\mathbb{K}$ be a field such that $\charr(\mathbb{K})= p$ and  let $G=(V,E)$ be a graph. Then 
$\Der_{\mathbb{K}}\left(\A(G)\right)=0$ for the following cases.
 \begin{enumerate}[label=(\roman*)]
\item \label{corol:twinfree1} If $p \not\in \{2,3\}$ and $|\T(i)| \leq 2$ for all $i \in V$.
\item \label{corol:twinfree2} If $p = 2$ and $G$  is twin-free.
\end{enumerate}
 \end{corollary}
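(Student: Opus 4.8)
The plan is to obtain both statements as direct consequences of the structural results already established, namely Lemma \ref{lemma:algum}, Lemma \ref{prop:necessarylemma} and Corollary \ref{corol:p>3}. In each case the strategy is the same two-step reduction: first I would argue that any $d \in \Der_{\mathbb{K}}(\A(G))$ is forced to be a \emph{diagonal} matrix (all off-diagonal entries vanish), and then I would invoke the non-existence of nonzero diagonal derivations to conclude $d = 0$.

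For part \ref{corol:twinfree1}, I would fix $d \in \Der_{\mathbb{K}}(\A(G))$ and take arbitrary $k,\ell \in V$ with $k \neq \ell$. Since the hypothesis $p \notin \{2,3\}$ gives in particular $p \neq 2$, Lemma \ref{prop:necessarylemma} applies and tells us that $d_{k\ell} \neq 0$ would force $|\T(\ell)| \geq 3$; but the assumption $|\T(i)| \leq 2$ for all $i \in V$ rules this out, so $d_{k\ell} = 0$ for every $k \neq \ell$, i.e.\ $d$ is diagonal. To finish I would apply Corollary \ref{corol:p>3}, which holds for $p \notin \{0,3\}$: from $d_{ij} = d_{ji} = 0$ for all $i \neq j$ it yields $d_{ii} = 0$ for all $i$, hence $d = 0$. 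Because $p \notin \{2,3\}$ implies $p \neq 3$, the only value not directly covered by Corollary \ref{corol:p>3} is $p = 0$, and there the absence of nonzero diagonal derivations follows from \cite{PMP3}, as remarked immediately after that corollary.

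For part \ref{corol:twinfree2}, the reduction to a diagonal matrix is even quicker and does not require Lemma \ref{prop:necessarylemma}. Since $G$ is twin-free, the relation $i \sim_{t} j$ forces $i = j$; hence for $i \neq j$, Lemma \ref{lemma:algum} shows that $d_{ij} \neq 0$ would imply $i \sim_{t} j$, a contradiction, so $d_{ij} = 0$ for all $i \neq j$ and $d$ is diagonal. Finally, as $p = 2 \notin \{0,3\}$, Corollary \ref{corol:p>3} again gives $d_{ii} = 0$ for all $i$, so $d = 0$.

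Both arguments are short, so I do not expect a serious computational obstacle; the only point demanding genuine care is matching the characteristic hypotheses to the exact ranges in which the quoted results are valid. Concretely, one must verify that Lemma \ref{prop:necessarylemma} is invoked only when $p \neq 2$ (true in case \ref{corol:twinfree1}) and that Corollary \ref{corol:p>3} is invoked only when $p \notin \{0,3\}$ (true in both cases, since there $p = 2$ or $p \geq 5$), with the characteristic-zero contingency of part \ref{corol:twinfree1} dispatched via \cite{PMP3}. If one instead reads Corollary \ref{corol:p>3} as pertaining solely to nonzero characteristic, then part \ref{corol:twinfree1} becomes an immediate combination of Lemma \ref{prop:necessarylemma} and Corollary \ref{corol:p>3} with no exceptional case to treat.
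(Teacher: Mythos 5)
Your proposal is correct and follows essentially the same route as the paper: Lemma~\ref{prop:necessarylemma} (resp.\ twin-freeness) forces $d$ to be diagonal, and Corollary~\ref{corol:p>3} then kills the diagonal, with the $p=0$ contingency of part~(i) delegated to \cite{PMP3} exactly as the paper does. The only cosmetic difference is that in part~(ii) you cite Lemma~\ref{lemma:algum} directly where the paper cites Theorem~\ref{teo:principal}\ref{teo:principal1}, but the latter is itself a consequence of the former, so the argument is the same.
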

 \begin{proof}
If $p = 0$, the result follows from \cite[Thereom 2.3]{PMP3}. If $p \not\in \{0,2,3\} $ and $d\in \Der_{\mathbb{K}}\left(\A(G)\right)$ then by Lemma \ref{prop:necessarylemma}, $d_{ij}=0 $ for all $i,j\in V$ with $i\neq j$, which implies that $d=0$ by Corollary \ref{corol:p>3}. To prove \ref{corol:twinfree2}, note that if $d\in \Der_{\mathbb{K}}\left(\A(G)\right)$, by Theorem \ref{teo:principal} \ref{teo:principal1} we have $d_{ij}=0 $ for all $i,j\in V$ with $i\neq j$.
Therefore, by Corollary \ref{corol:p>3}, we have $d=0$.

 \end{proof}

\smallskip
\section*{Acknowledgements} The first author thanks the UTFPR for all support provided and the use of the CCCT-CP computer facilities.
\bigskip

\end{document}